\newtheorem{theorem}{Theorem}[section] 
\newtheorem{lemma}[theorem] {Lemma} 
\theoremstyle{definition}
\newtheorem{definition}[theorem]{Definition}
\newtheorem{example}[theorem]{Example}
\newcommand\G{\mathfrak{G}}
\newcommand\fH{\mathfrak{H}}
\newcommand\bbZ{\mathbb{Z}}
\newcommand\pdot{{\cdot}}
\newcommand\supp{\operatorname{supp}}
\newcommand\textb{\textrm{b}}
\newcommand\Lat{\operatorname{Lat}}
\begin{document}

\title{The Rhodes semilattice of a biased graph }
\author{Michael J.\ Gottstein and Thomas Zaslavsky}
\address{Department of Mathematics and Statistics \\ Binghamton University (SUNY) \\ Binghamton, New York 13902-6000 USA}
\email{gottstein@math.binghamton.edu, zaslav@math.binghamton.edu}

\begin{abstract}
    
We reinterpret the Rhodes semilattices $R_n(\G)$ of a group $\G$ in terms of gain graphs and generalize them to all gain graphs, both as sets of partition-potential pairs and as sets of subgraphs, and for the latter, further to biased graphs. Based on this we propose four different natural lattices in which the Rhodes semilattices and its generalizations are order ideals.

\end{abstract}

\keywords{Rhodes semilattice of a group, gain graph, biased graph, partition-potential pair, balanced closed subgraph, groupoid}

\subjclass[2010]{Primary 06A12, Secondary 05B35, 05C22, 06C10, 20L05}

\maketitle

\section{Introduction}\label{intro}

The Rhodes semilattice $R_n(\G)$ of a group $\G$, introduced by John Rhodes for semigroup theory (in \cite{R, AHNR}; we particularly refer to \cite{Rhodes}), is a partial ordering of pairs consisting of a partition and a potential system on subsets of a finite set.  We reinterpret the Rhodes semilattice as a semilattice of subgraphs in a complete link gain graph, which is a graph with edges labeled from the group $\G$.  From this point of view the elements of the Rhodes semilattice are closed, balanced subgraphs and the ordering is by gain-graph inclusion.  We use this reinterpretation to generalize Rhodes semilattices to semilattices of closed, balanced subgraphs of any gain graph and even more generally any biased graph.  (All these concepts are defined below.)  Based on our graphic interpretation we propose three natural lattices that contain as order ideals the Rhodes semilattice and its generalizations.  

The graphical interpretation of $R_n(\G)$ was initiated by the second author in a referee's report on \cite{Rhodes} and is mentioned in the published version; see \cite[Remark 6.1]{Rhodes}.  Here we provide the full generalization suggested by that insight, with proof.  Our partition-potential generalization of $R_n(\G)$ is newly developed by the first author.

\section{A new perspective on the Rhodes semilattice}\label{newrhodes}

We begin with basic definitions, many of which are from \cite{Rhodes} and \cite{Zas1}.

A \emph{partial partition} $\pi=\{\pi_1, \ldots, \pi_j\}$ of a set $X$ is a partition of a subset of $X$.  The \emph{support} $\supp\pi$ is the union of all blocks of $\pi$. 
The set of partial partitions of $X$ is partially ordered by refinement, i.e., $\tau \leq \pi$ if every block of $\tau$ is contained in a block of $\pi$.  The smallest partial partition of $X$ is the empty partition, $\emptyset$; the largest is the trivial partition, $\{X\}$.  

For a graph $\Upsilon$ we define the partition of $V(\Upsilon)$ induced by $\Upsilon$ by $\pi(\Upsilon)=\{V(D): D \text{ is a component of } \Upsilon \}$. 

Let $X$ be a finite set of order $n$ and $\G$ a group. Given $I \subseteq X$, we denote by $\G^I$ the collection of all functions $\theta: I \rightarrow \G$; we call such a function a \emph{potential function} on $I$. The group acts on the left of $\G^I$ by $(g \theta)(x)=g \theta(x)$; we call an element of the quotient set $\G^I / \G$ a \emph{potential} for $I$ (\cite{Rhodes} calls it a ``cross section''). 

A \emph{potential system} for a partial partition $\pi$ is a collection of potentials on the blocks of $\pi$.  Formally, if $\theta: Y \to \G$ and $\eta: Z \to \G$, where $Y$ and $Z$ are any subsets of $X$ that contain $\supp\pi$, we write $\eta \sim_\pi \theta$ if $\eta|_{\pi_i} \in \G \cdot \theta|_{\pi_i}$ for each block $\pi_i$ in $\pi$; then $\sim_\pi$ is an equivalence relation.  We denote the equivalence class of $\theta \in \G^I$ for a partial partition $\pi$ by $[\theta]_\pi$; this is a \emph{potential system} on $\pi$.  

The \emph{Rhodes semilattice} (see \cite[section 3]{Rhodes}) is the set of all pairs $(\pi, [\theta]_{\pi})$ in which $[\theta]_{\pi}$ is a potential system on $\pi$, which we call \emph{partition-potential pairs}.  They are partially ordered by restriction; that is, $(\tau, [\eta]_{\tau}) \leq (\pi, [\theta]_{\pi}])$ if $\tau$ refines $\pi$ and $[\eta]_\tau = [\theta]_\tau$.  
The Rhodes semilattice has a meet operation defined by $(\pi,[\theta]_{\pi}) \wedge (\tau,[\eta]_{\tau}) = (\rho,[\theta]_{\rho}) = (\rho,[\eta]_{\rho})$, where $\rho$ is the partition of $\supp\rho = \supp\pi \cap \supp\tau$ induced by the equivalence relation $\sim$ defined on $\supp\rho$ in the following way: $x \sim y$ if and only if there exist blocks $\tau_i$ and $\pi_j$ such that $x,y \in  \tau_i \cap \pi_j$ and $\theta(x)^{-1}\theta(y)=\eta(x)^{-1}\eta(y)$.  To distinguish this representation of the Rhodes semilattice from the graphical one to be defined, we call it the \emph{partition-potential Rhodes semilattice}.

In what follows $\Gamma=(X,E)$ will denote a graph, which may have loops and multiple edges unless it is said to be simple. If $e\in E$, $\nu_{\Gamma}(e)=\{v, w\}$ means $e$ has endpoints $v,w$ (a multiset to account for loops). If $\Upsilon \subseteq \Gamma$ is a subgraph, $V(\Upsilon)$ is its vertex set and $E(\Upsilon)$ is its edge set.  We will be most interested in the subgraph induced on a subset $I$ of $X$, which is $\Gamma{:}I=(I, E{:}I)$ where
$$
E{:}I=\{e \in E: \nu_{\Gamma}(e) \subseteq I \},
$$
and the subgraph induced by a partial partition $\pi$ of $X$, which is
$$
\Gamma{:}\pi=\bigcup\{(\Gamma{:}\pi_i): \pi_i \in \pi\} .
$$

A \emph{gain graph}, denoted by $\Phi=(\Gamma,\phi,\G)$, is a graph equipped with a \emph{gain function} $\phi$. The gain function is defined on the edges of the graph to the group $\G$, such that reversing the direction of an edge inverts the gain. We write $\phi(e ; v, w)$ in order to indicate the sense in which the gain is measured; thus $\phi(e ; w, v)=\phi(e ; v, w)^{-1}$. The gain of a path is its edge gain product; thus a path $P=v_0e_1 v_1e_2 \cdots e_kv_k$ has gain $\phi(P)=\phi(e_1;v_0,v_1) \phi(e_2;v_1,v_2) \cdots \phi(e_k;v_{k-1},v_k).$ A circle is called \emph{balanced} if its gain (considered as a closed path) is $1$; this property is independent of its representation as a closed path.  The set of balanced circles of $\Phi$ is denoted by $\mathscr{B}(\Phi)$.  A subgraph $\Upsilon$ of a gain graph $\Phi$ is balanced if every circle in $\Upsilon$ is balanced. It is \emph{closed and balanced} if it is balanced and whenever there is a balanced circle $C\subseteq \Phi$ such that $C\setminus e \subseteq \Upsilon $, then $e \in E(\Upsilon)$.

\begin{lemma}\thlabel{L1} 
In a balanced gain graph, the gain of a path depends only on its initial and final vertices. 
\end{lemma}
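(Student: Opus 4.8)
The plan is to derive the lemma from the apparently stronger statement that \emph{every closed walk in a balanced gain graph has gain $1$}. This suffices: if $P$ and $Q$ are two walks (in particular, two paths) with common initial vertex $v$ and common final vertex $w$, then $P$ followed by $Q$ traversed backwards is a closed walk based at $v$ with gain $\phi(P)\phi(Q)^{-1}$, so the stronger statement forces $\phi(P)=\phi(Q)$.

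To prove that a closed walk $W=v_0e_1v_1\cdots e_\ell v_0$ has $\phi(W)=1$, I would induct on the length $\ell$. If $W$ repeats a vertex, say $v_i=v_j$ with $0\le i<j\le\ell$ and $(i,j)\neq(0,\ell)$, then $W$ decomposes into the closed walk $W_1=v_ie_{i+1}\cdots e_j v_i$ and the closed walk $W_2=v_0e_1\cdots e_i v_i e_{j+1}v_{j+1}\cdots e_\ell v_0$, each of length strictly between $0$ and $\ell$; writing the gain of $W$ as the ordered product $\phi(v_0 \cdots v_i)\,\phi(W_1)\,\phi(v_j \cdots v_\ell)$ and using the inductive hypothesis $\phi(W_1)=1$ gives $\phi(W)=\phi(v_0 \cdots v_i)\,\phi(v_j \cdots v_\ell)=\phi(W_2)=1$, again by induction. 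If instead $W$ repeats no vertex beyond $v_0=v_\ell$, then either it repeats no edge and is therefore a circle, so $\phi(W)=1$ directly from the balance hypothesis, or it repeats an edge — which, given the distinctness of the vertices, can only be the degenerate backtrack $v_0ev_1ev_0$ of length $2$, whose gain telescopes to $\phi(e;v_0,v_1)\phi(e;v_1,v_0)=1$.

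The only genuine obstacle is conceptual: balance is a hypothesis about circles (simple closed walks) alone, whereas $PQ^{-1}$ may run through repeated vertices and edges, so a reduction like the one above is unavoidable. A minor technical caution is that $\G$ need not be abelian, so in the splitting step one may substitute an equal subwalk-gain in place, or cancel an adjacent inverse pair, but never reorder factors. An essentially equivalent alternative would be to induct on $|E(P)|+|E(Q)|$ for paths $P,Q$ from $v$ to $w$: if $P\neq Q$, take the last vertex $u$ up to which they agree and the first vertex $x$ past $u$ common to both tails, observe that $P[u,x]$ and $Q[u,x]$ form a circle so that $\phi(P[u,x])=\phi(Q[u,x])$ by balance, splice $Q[u,x]$ into $P$, and recurse; I find the closed-walk version cleaner because it sidesteps having to verify that the spliced object is still a path.
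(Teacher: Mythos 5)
Your proof is correct, and it is genuinely different from what the paper does: the paper gives no argument at all, simply citing the proof of Lemma 5.3 of Zaslavsky's \emph{Biased graphs.\ I}, where the statement is obtained as a byproduct of constructing a potential function from a spanning tree of each component (equivalently, switching the gains to the identity on a tree and checking the remaining edges via balance of fundamental circles). You instead prove the stronger assertion that every closed walk has gain $1$, by induction on length, reducing to circles by splitting at a repeated vertex and disposing of the degenerate repeated-edge case (the backtrack $v_0ev_1ev_0$, whose gain telescopes by the edge-reversal rule). Your route is self-contained and more elementary --- it isolates exactly the point that matters, namely that balance is a hypothesis about circles while $PQ^{-1}$ is merely a closed walk --- and your care about non-commutativity (substitute the subwalk's gain in place, never reorder) is exactly right. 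The tree-potential route buys something you don't get here, namely the potential function itself, which the paper needs anyway in \thref{L2}; but as a proof of \thref{L1} alone, yours is complete. One small point worth making explicit if you write this up: the base cases are the empty closed walk (empty product, gain $1$) and a single loop, which is a circle and hence balanced by hypothesis.
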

\begin{proof} 
This is implicit in the proof of \cite[Lemma 5.3]{Zas1}.
\end{proof}

A \emph{potential function} for a balanced gain graph $\Phi$ is a function $\theta: V(\Phi) \rightarrow \mathfrak{G}$ such that $\phi(e ; v, w)= \theta(v)^{-1} \theta(w)$ for each edge $e \in E(\Gamma) $.

\begin{lemma}\thlabel{L2} 
Let $\Phi$ be a gain graph. There is a potential function $\theta$ for $\Phi$ if and only if $\Phi$ is balanced.  Every potential function in $[\theta]_{\pi(\Phi)}$ defines the same gains on $\Phi$, and every potential function defining the gains of $\Phi$ is in $[\theta]_{\pi(\Phi)}$.
\end{lemma}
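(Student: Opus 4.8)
The plan is to dispatch the three assertions in turn, all of them resting on \thref{L1} together with a telescoping identity for path gains. As a preliminary I would record that the blocks of $\pi(\Phi)=\pi(\Gamma)$ are exactly the vertex sets of the components of $\Phi$, so $\supp\pi(\Phi)=V(\Phi)$; hence a potential function and every member of $[\theta]_{\pi(\Phi)}$ are functions defined on all of $V(\Phi)$, and $\sim_{\pi(\Phi)}$ compares two such functions component by component. For the ``only if'' part of the first assertion I would take a potential function $\theta$ and an arbitrary circle $C=v_0e_1v_1\cdots e_kv_k$ with $v_k=v_0$; substituting $\phi(e_i;v_{i-1},v_i)=\theta(v_{i-1})^{-1}\theta(v_i)$ into the gain product makes it telescope to $\theta(v_0)^{-1}\theta(v_0)=1$, so every circle is balanced and $\Phi$ is balanced.

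For the ``if'' part, assuming $\Phi$ balanced, I would construct $\theta$ one component at a time: in a component $D$ fix a base vertex $b$, set $\theta(b)=1$, and for each $v\in V(D)$ choose a path $P_v$ from $b$ to $v$ and put $\theta(v)=\phi(P_v)$; by \thref{L1} this value does not depend on the choice of $P_v$, so $\theta$ is well defined on $V(\Phi)$. To verify the potential-function condition at an edge $e$ with $\nu_\Gamma(e)=\{v,w\}$ in $D$, I would compare the two walks $P_v e$ and $P_w$ from $b$ to $w$: by \thref{L1} they have the same gain, i.e.\ $\theta(v)\,\phi(e;v,w)=\theta(w)$, which rearranges to $\phi(e;v,w)=\theta(v)^{-1}\theta(w)$. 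A loop at $v$ is the special case $v=w$, forcing $\phi(e;v,v)=1$, consistent with loops being balanced circles.

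For the second and third assertions, fix one potential function $\theta$; the claim to prove is that $[\theta]_{\pi(\Phi)}$ is exactly the set of all potential functions for $\Phi$. The crux is a single algebraic observation: for an edge $e$ with endpoints $v,w$, the pair of equations $\theta(v)^{-1}\theta(w)=\phi(e;v,w)=\eta(v)^{-1}\eta(w)$ is equivalent to $\eta(v)\theta(v)^{-1}=\eta(w)\theta(w)^{-1}$. Granting this: if $\eta\in[\theta]_{\pi(\Phi)}$ then on each component $D_i$ (vertex set $\pi_i$) there is $g_i\in\G$ with $\eta|_{\pi_i}=g_i\pdot\theta|_{\pi_i}$, so $\eta(v)\theta(v)^{-1}=g_i$ is constant on $\pi_i$, the displayed identity holds across every edge of $\Phi$, and $\eta$ is a potential function; conversely, if $\eta$ is a potential function, the displayed identity shows $\eta(v)\theta(v)^{-1}$ is constant across each edge, hence constant ($=g_i$) on each connected component $D_i$, whence $\eta|_{\pi_i}=g_i\pdot\theta|_{\pi_i}$ and $\eta\sim_{\pi(\Phi)}\theta$.

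I do not expect a genuine obstacle: the one substantive input, well-definedness of the path-gain construction, is handed to us by \thref{L1}, and everything else is bookkeeping. The points I would be careful about are that the $\G$-action is on the \emph{left}, so the ``gauge'' elements $g_i$ must sit on the left and the quantity that is constant on components is $\eta(v)\theta(v)^{-1}$, not $\theta(v)^{-1}\eta(v)$; and the degenerate cases of loops and of isolated vertices (for which there is nothing to check), both handled above.
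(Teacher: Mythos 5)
Your proof is correct and follows essentially the same route as the paper's: telescoping the edge gains around a circle for the forward direction, the rooted path-gain construction (justified by \thref{L1}) for the converse, and the observation that two potential functions define the same gains exactly when they differ by a left translation on each component. The only cosmetic difference is that you establish the constancy of $\eta(v)\theta(v)^{-1}$ edge-by-edge and propagate it by connectivity, where the paper compares arbitrary vertex pairs in a component directly via path independence; both amount to the same argument.
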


\begin{proof} 
Balance is clear by computing the gain of a circle.

Conversely, we can obtain a potential function for a balanced gain graph $\Phi$ by choosing a root node $r$ for each component of $\Phi$ and defining $\theta(v)=\phi(P_{r v})$ where $P_{r v}$ is an $r v$-path in a component of $\Phi$. This is well defined because the gain function in a balanced gain graph is path independent. 

Suppose $\phi(e)= \theta(v)^{-1} \theta(w)$ and $\eta$ is another potential function of $\Phi$, then (because the gain function in a balanced subgraph is path independent) for any path $P_{vw}$ in $\Phi$, $(\theta(v))^{-1}\theta(w)=\phi(P_{vw})=(\eta(v))^{-1}\eta(w)$. This is true for every $v,w$ in a block of $\pi(\Phi)$, so by definition $\eta \in [\theta]_{\pi(\Phi)}$. Conversely, if $\eta \in [\theta]_{\pi(\Phi)}$ then $(\theta(v))^{-1}\theta(w)=(\eta(v))^{-1}\eta(w)$ for any $v,w$ in a block of $\pi(\Phi)$, so $\phi(e)=(\theta(v))^{-1}\theta(w)=(\eta(v))^{-1}\eta(w)$.
\end{proof}

The \emph{group expansion} of a graph $\Gamma$ \cite[Example 6.7]{Zas1} is denoted by $\G \pdot \Gamma = (V(\Gamma), \G \times E(\Gamma),\phi)$, where $\nu_{\G \pdot \Gamma}(g,e) = \nu_\Gamma(e)$.  
To define the gain of $(g,e)$ we must take account of the endpoints, $\nu_{\G \pdot \Gamma}(g,e) = \{v,w\}$.  For the sake of notation, arbitrarily pick an orientation, $(e;v,w)$, and define $\phi(g,e;v,w)=g$, $\phi(g,e;w,v)=g^{-1}$ for each $g$.  In this notation, $(g,e)$ and $(g^{-1},e)$ are different edges (unless $g=g^{-1}$) whose gains are 
\begin{gather*}
\phi(g,e;v,w)=g,\ \phi(g,e;w,v)=g^{-1}, \\
\phi(g^{-1},e;v,w)=g^{-1},\ \phi(g^{-1},e;w,v)=g.
\end{gather*}

\begin{definition}
 The \emph{graphic Rhodes semilattice of $\G\pdot K_n$}, denoted by $R^\textb(\G\pdot K_n)$, is the family of closed and balanced subgraphs in $\G\pdot K_n$, ordered by inclusion.  (The superscript $\textb$ distinguishes this from the partition-potential Rhodes semilattice.)  Its meet operation is intersection.
\end{definition}

\begin{theorem}\label{Rhodesiso}
The partition-potential Rhodes semilattice $R_n(\G)$ is isomorphic to the graphic Rhodes semilattice $R^\textb(\G\pdot K_n)$.
\end{theorem}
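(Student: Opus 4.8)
The plan is to construct an explicit bijection between partition-potential pairs and closed, balanced subgraphs and to show it is an order isomorphism in both directions. Since both structures are meet-semilattices and a poset isomorphism automatically carries greatest lower bounds to greatest lower bounds, an order isomorphism is automatically a semilattice isomorphism; a byproduct is that the intricate partition-potential meet formula is simply subgraph intersection transported across the bijection, so I would not need to verify it separately.

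First I would define $\Psi\colon R_n(\G)\to R^\textb(\G\pdot K_n)$. Given $(\pi,[\theta]_\pi)$, choose a representative potential function $\theta$ on $\supp\pi$ and let $\Psi(\pi,[\theta]_\pi)$ be the subgraph $\Upsilon$ of $\G\pdot K_n$ with $V(\Upsilon)=\supp\pi$ whose edges are, for each block $\pi_i$ and each pair of distinct vertices $v,w\in\pi_i$, the unique edge $(g,e_{vw})$ of $\G\pdot K_n$ (where $e_{vw}$ is the edge of $K_n$ joining $v$ and $w$) with $\phi(g,e_{vw};v,w)=\theta(v)^{-1}\theta(w)$, and with no edges joining distinct blocks. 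This is well defined: replacing $\theta$ by any $\theta'\sim_\pi\theta$ multiplies $\theta|_{\pi_i}$ on the left by a constant, which leaves every product $\theta(v)^{-1}\theta(w)$ with $v,w\in\pi_i$ unchanged. Each component $\Upsilon{:}\pi_i$ has $\theta|_{\pi_i}$ as a potential function, hence is balanced by \thref{L2}, and since no circle meets two blocks, $\Upsilon$ is balanced. For closedness, suppose $C$ is a balanced circle of $\G\pdot K_n$ with $C\setminus e\subseteq\Upsilon$: then $C\setminus e$ is a connected subgraph of $\Upsilon$, so it lies in a single block $\pi_i$, whence the endpoints $v,w$ of $e$ lie in $\pi_i$; computing the gain of $C$ gives $\phi(e;v,w)=\phi(C\setminus e;v,w)=\theta(v)^{-1}\theta(w)$, which is exactly the condition for $e\in E(\Upsilon)$.

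Next I would define $\Phi\colon R^\textb(\G\pdot K_n)\to R_n(\G)$ by $\Phi(\Upsilon)=(\pi(\Upsilon),[\theta]_{\pi(\Upsilon)})$, where $\theta$ is a potential function of the balanced graph $\Upsilon$ supplied by \thref{L2}; the same lemma shows $[\theta]_{\pi(\Upsilon)}$ is independent of the choice of $\theta$, so $\Phi$ is well defined. That $\Phi\circ\Psi=\mathrm{id}$ is immediate, because the components of $\Psi(\pi,[\theta]_\pi)$ are exactly the blocks of $\pi$ and $\theta$ is by construction a potential function of that subgraph. For $\Psi\circ\Phi=\mathrm{id}$, put $\Upsilon'=\Psi(\Phi(\Upsilon))$; the inclusion $\Upsilon\subseteq\Upsilon'$ holds because every edge of $\Upsilon$ joins two vertices of one component and its gain agrees with the potential $\theta$, and the reverse inclusion $\Upsilon'\subseteq\Upsilon$ is where closedness is essential: an edge $e=(g,e_{vw})$ of $\Upsilon'$ has $v,w$ in one component $D$ of $\Upsilon$ with $\phi(e;v,w)=\theta(v)^{-1}\theta(w)$, so for any $vw$-path $Q$ in $D$ the closed walk $Q+e$ is a balanced circle with $(Q+e)\setminus e=Q\subseteq\Upsilon$, forcing $e\in E(\Upsilon)$.

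Finally I would check order preservation. If $\tau$ refines $\pi$ and $[\eta]_\tau=[\theta]_\tau$, then each edge of $\Psi(\tau,[\eta]_\tau)$ joins vertices in a common block of $\tau$, hence of $\pi$, with gain $\eta(v)^{-1}\eta(w)=\theta(v)^{-1}\theta(w)$, so it is an edge of $\Psi(\pi,[\theta]_\pi)$. Conversely, if $\Psi(\tau,[\eta]_\tau)\subseteq\Psi(\pi,[\theta]_\pi)$, then $\supp\tau\subseteq\supp\pi$ and each component of the first subgraph lies in a component of the second, giving $\tau\le\pi$; and for distinct $v,w$ in a block $\tau_k$ the defining edge of $\Psi(\tau,[\eta]_\tau)$ at $v,w$ lies in $\Psi(\pi,[\theta]_\pi)$, forcing $\eta(v)^{-1}\eta(w)=\theta(v)^{-1}\theta(w)$, from which $\eta|_{\tau_k}$ and $\theta|_{\tau_k}$ differ by a left translation and so $[\eta]_\tau=[\theta]_\tau$. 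The main obstacle is organizing the two ``hard'' containments — closedness of $\Psi(\pi,[\theta]_\pi)$ and $\Upsilon'\subseteq\Upsilon$ — each of which rests on the single principle that a balanced circle is forced into a closed balanced subgraph once all but one of its edges lie there, together with careful handling of the degenerate cases (the empty partition, singleton blocks, and the absence of loops in $K_n$, which guarantees every edge has distinct endpoints).
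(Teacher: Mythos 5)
Your proof is correct and rests on the same natural correspondence the paper uses (the map it calls $\mathbf{B}$, sending $(\pi,[\theta]_\pi)$ to the subgraph with gains $\theta(v)^{-1}\theta(w)$ on each block, and its inverse $B\mapsto(\pi(B),[\theta]_{\pi(B)})$); the paper simply defers the verification to the more general Theorem \ref{isomorphism} for arbitrary gain graphs, whereas you carry out the bijection, mutual inverseness, and two-sided order preservation directly for $\G\pdot K_n$. Your opening observation that a poset isomorphism automatically transports meets is sound and matches the paper's implicit treatment of the meet operation.
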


\begin{proof}
In the natural correspondence between $R_n(\G)$ and $R^\textb(\G\pdot K_n)$, the pair $(\pi,[\theta]_{\pi})\in R_n(\G)$ corresponds to the subgraph of $\G\pdot K_n$ whose components are complete subgraphs in each vertex set $\pi_i$ with gains given by any potential function in $[\theta]_{\pi}$. We correspond a closed and balanced subgraph $B$ of $\G\pdot K_n$ to the pair $(\pi(B),[\theta]_{\pi(B)})$, where $\theta$ is a potential function defining the gains of $B$. 
In Theorem \ref{isomorphism} we prove (in more generality) that this correspondence is an isomorphism. 
\end{proof}

Theorem \ref{Rhodesiso} shows us how to generalize the Rhodes semilattice to gain and biased graphs, which we do in the next sections.  We note that it is equivalent to \cite[Theorem 4.1]{Rhodes}, which uses the language of groupoids, as we explain in Section \ref{groupoid}.  We have given Theorem 2.4 as preparation for the generalization in Section \ref{gain}.

\section{Generalization to gain graphs}\label{gain}

Let $\Phi$ be a gain graph with gain group $\G$ and vertex set $X$. Let $\pi=\{\pi_1,\ldots,\pi_j\}$ be a partial partition of $X$ and $[\theta]_{\pi}$ a potential system for $\pi$. We define a function $\mathbf{B}$ from partition-potential pairs to subgraphs of $\Phi$ by 
$$\mathbf{B}(\pi,[\theta]_{\pi}):= (\supp\pi, \{e \in E(\Phi{:}\pi) : (\exists i)\ \nu_{\Gamma}(e)=\{v,w\} \subseteq \pi_i,\ \phi(e) = \theta^{-1}(v)\theta(w)\}).$$  
This subgraph is well defined, by \thref{L2}.

 We say $(\pi,[\theta]_{\pi})$ is a \emph{$\Phi$-connected} partition-potential pair if $\mathbf{B}(\pi,[\theta]_{\pi_i})$ is connected for each $\pi_i \in \pi$.  

 \begin{definition} 
Let $\Phi$ be a gain graph with vertex set $X$ and group $\G$. The \emph{partition-potential Rhodes semilattice of $\Phi$}, denoted by $R(\Phi)$, is the set of all $\Phi$-connected partition-potential pairs of $\Phi$. The meet operation is the same as it is for $R_n(\G)$. 
\end{definition}

We can see that $R_n(\G) = R(\G\pdot K_n)$, so the partition-potential Rhodes semilattice of a gain graph is a generalization of the original Rhodes semilattice.

\begin{definition} 
Let $\Phi$ be a gain graph. The \emph{graphic Rhodes semilattice of $\Phi$}, denoted by $R^\textb(\Phi)$, is the family of closed and balanced subgraphs, ordered by inclusion. The meet operation is intersection.
\end{definition}

\begin{lemma}\thlabel{L3}
Let $\Phi$ be a gain graph, $\pi$ a partial partition of $X$, and $(\pi,[\theta]_{\pi})\in R(\Phi)$.  Then $\mathbf{B}(\pi,[\theta]_{\pi})$ is a closed and balanced subgraph of $\Phi$.
\end{lemma}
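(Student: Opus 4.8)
The plan is to check the two defining properties, balance and closure, in turn, using throughout the observation that the edges of $\mathbf{B}:=\mathbf{B}(\pi,[\theta]_\pi)$ never cross the blocks of $\pi$. By the definition of $\mathbf{B}$, every $e\in E(\mathbf{B})$ has both endpoints in a single block $\pi_i$, so $\mathbf{B}$ is the vertex-disjoint union of its restrictions $\mathbf{B}_i:=\mathbf{B}{:}\pi_i$, each $\mathbf{B}_i$ is a subgraph of $\Phi{:}\pi_i$, and the membership condition in the definition of $\mathbf{B}$ says precisely that $\theta|_{\pi_i}$ is a potential function for $\mathbf{B}_i$. In particular, every connected subgraph of $\mathbf{B}$ with at least one edge is contained in a single $\mathbf{B}_i$.

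Balance is then immediate from \thref{L2}: since $\theta|_{\pi_i}$ is a potential function for $\mathbf{B}_i$, each $\mathbf{B}_i$ is balanced, and a gain graph all of whose components are balanced is balanced, so $\mathbf{B}$ is balanced. (Equivalently, one computes directly that a circle $v_0 e_1 v_1\cdots e_k v_0$ inside $\mathbf{B}_i$ has gain $\prod_{j=1}^{k}\theta(v_{j-1})^{-1}\theta(v_j)=\theta(v_0)^{-1}\theta(v_0)=1$.)

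For closure, suppose $C$ is a balanced circle of $\Phi$ and $e\in E(C)$ with $C\setminus e\subseteq\mathbf{B}$; write $\nu_\Gamma(e)=\{v,w\}$. If $e$ is a loop, then $C=e$ and $C\setminus e$ is the single vertex $v$, which lies in $\supp\pi$ and hence in some block $\pi_i$; balance of $C$ forces $\phi(e;v,v)=1=\theta(v)^{-1}\theta(v)$, so $e\in E(\mathbf{B})$. Otherwise $P:=C\setminus e$ is a $vw$-path with at least one edge contained in $\mathbf{B}$, hence contained in a single $\mathbf{B}_i$; so $v,w\in\pi_i$ and therefore $e\in E(\Phi{:}\pi_i)\subseteq E(\Phi{:}\pi)$. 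Since $\theta|_{\pi_i}$ is a potential function for $\mathbf{B}_i\supseteq P$, \thref{L1} gives $\phi(P;v,w)=\theta(v)^{-1}\theta(w)$. Reading the gain of $C$ as a closed path based at $v$, balance gives $\phi(e;v,w)\,\phi(P;w,v)=1$, so $\phi(e;v,w)=\phi(P;v,w)=\theta(v)^{-1}\theta(w)$. Thus $e$ satisfies the membership condition defining $E(\mathbf{B})$, i.e.\ $e\in E(\mathbf{B})$, and $\mathbf{B}$ is closed and balanced.

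The argument is essentially bookkeeping, so I do not expect a serious obstacle; the only points requiring a little care are the degenerate cases (a loop edge, or a length-$2$ circle) and keeping the orientation conventions for $\phi$ consistent when converting between $\phi(P;v,w)$ and $\phi(P;w,v)$. I note in passing that the hypothesis $(\pi,[\theta]_\pi)\in R(\Phi)$ — that is, $\Phi$-connectedness of the blocks — is not actually used here: $\mathbf{B}(\pi,[\theta]_\pi)$ is closed and balanced for every partition-potential pair $(\pi,[\theta]_\pi)$.
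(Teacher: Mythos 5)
Your proof is correct and follows essentially the same route as the paper's: balance comes from the fact that the gains of $\mathbf{B}(\pi,[\theta]_\pi)$ are defined by a potential function, and closure comes from observing that the closing edge of a balanced circle is forced to have gain $\theta(v)^{-1}\theta(w)$ and hence satisfies the membership condition in the definition of $\mathbf{B}$. Your side remark that $\Phi$-connectedness is never used is also accurate (the paper's proof does not use it either, and \thref{L3.5} implicitly relies on this stronger form); you simply spell out the degenerate cases and orientation bookkeeping that the paper leaves implicit.
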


\begin{proof}
$\mathbf{B}(\pi,[\theta]_{\pi})$ is balanced because it has gains defined by a potential function. It is closed because if $v,w$ are in a component of $\mathbf{B}(\pi,[\theta]_{\pi})$, then the edge $e$ with $\nu_{\Gamma}(\theta^{-1}(v)\theta(w),e)=\{v,w\}$ and gain  $\phi(e;v,w) = \theta^{-1}(v)\theta(w)$, if it exists in $\Phi$, is in $\mathbf{B}(\pi,[\theta]_{\pi})$ by the definition of $\mathbf{B}$.
\end{proof}

 \begin{lemma}\thlabel{L3.5}
Let $\Phi$ be a gain graph.  $\mathbf{B}$ is a surjection onto the closed and balanced subgraphs of $\Phi$.
\end{lemma}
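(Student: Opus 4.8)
The plan is to show that every closed and balanced subgraph $B$ of $\Phi$ lies in the image of $\mathbf{B}$, in fact as the image of a pair in $R(\Phi)$ (which gives the stronger statement that $\mathbf{B}$ restricted to $R(\Phi)$ is already onto). First I would produce a candidate preimage: since $B$ is balanced, \thref{L2} furnishes a potential function $\theta\colon V(B)\to\G$ defining the gains of $B$; let $\pi(B)$ be the partition of $V(B)$ into the vertex sets of the components of $B$. Then $(\pi(B),[\theta]_{\pi(B)})$ is a partition-potential pair with $\supp\pi(B)=V(B)$, and the goal is to prove $\mathbf{B}(\pi(B),[\theta]_{\pi(B)})=B$ and that this pair is $\Phi$-connected.

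The inclusion $B\subseteq\mathbf{B}(\pi(B),[\theta]_{\pi(B)})$ is immediate: any $e\in E(B)$ has both endpoints $v,w$ in a common component of $B$, hence in a common block of $\pi(B)$, so $e\in E(\Phi{:}\pi(B))$, and $\phi(e;v,w)=\theta(v)^{-1}\theta(w)$ because $\theta$ is a potential function for $B$; thus $e$ meets the membership condition in the definition of $\mathbf{B}$, and the vertex sets agree since $\supp\pi(B)=V(B)$. The reverse inclusion is the one substantive step, and it is where closedness of $B$ enters. Suppose $e\in E(\Phi{:}\pi(B))$ has endpoints $\{v,w\}$ in a block $\pi_i$ with $\phi(e;v,w)=\theta(v)^{-1}\theta(w)$; if $e\in E(B)$ we are done, so assume not. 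Since $v$ and $w$ lie in the same component of $B$, there is a $vw$-path $P\subseteq B$ (the trivial path on $\{v\}$ when $v=w$); by \thref{L1} applied to the balanced graph $B$ its gain is $\phi(P)=\theta(v)^{-1}\theta(w)=\phi(e;v,w)$. Hence $C:=P\cup\{e\}$ is a balanced circle in $\Phi$ (a balanced loop at $v$ when $v=w$) with $C\setminus e=P\subseteq B$, so closedness of $B$ forces $e\in E(B)$, a contradiction. Therefore $\mathbf{B}(\pi(B),[\theta]_{\pi(B)})=B$.

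Finally I would verify $\Phi$-connectedness of $(\pi(B),[\theta]_{\pi(B)})$: running the same two inclusions restricted to edges with both endpoints in a single block $\pi_i$ identifies $\mathbf{B}(\pi(B),[\theta]_{\pi_i})$ with the component of $B$ on vertex set $\pi_i$, which is connected; hence the pair lies in $R(\Phi)$. The only delicate points are the degenerate cases, balanced loops and isolated vertices (singleton blocks), but the argument above dispatches them automatically: for an isolated vertex $v$ the empty graph on $\{v\}$ is a subgraph of $B$, and for a loop $e$ at $v$ with trivial gain the circle $C=\{e\}$ has $C\setminus e$ equal to that graph. Since the real content is carried by \thref{L1} and \thref{L2}, the "hard part" is simply ensuring that in every case the closedness hypothesis is applied to a bona fide balanced circle of $\Phi$.
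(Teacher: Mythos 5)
Your proposal is correct and follows essentially the same route as the paper's proof: the same candidate preimage $(\pi(B),[\theta]_{\pi(B)})$ via \thref{L2}, followed by the same two inclusions. You simply spell out the closure step (building the balanced circle $P\cup\{e\}$) in more detail than the paper does, and you additionally verify $\Phi$-connectedness of the preimage, which the paper leaves implicit here and only uses later.
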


\begin{proof}
By \thref{L3} we know the image of $\mathbf{B}$ is contained in the set of closed and balanced subgraphs.

Let $B$ be a closed and balanced subgraph.  By \thref{L2} there is a potential function $\theta$ defining its gains. If $e\in B$ then its vertices are in the same block of $\pi(B)$; it follows that $e$ is in $\mathbf{B}(\pi(B),[\theta]_{\pi(B)})$, which implies that $B\subseteq \mathbf{B}(\pi(B),[\theta]_{\pi(B)})$. 

Now let $e$ be an edge of $\mathbf{B}(\pi(B),[\theta]_{\pi(B)})$.  The vertices of $e$ are in one block of $\pi(B)$ and its gain $\phi(e;v,w) = \theta(v)^{-1}\theta(w)$.  Since $B$ is closed and $\theta$ is a potential function for $B$, $e$ is in $B$; therefore $B=\mathbf{B}(\pi(B),[\theta]_{\pi(B)})$.
\end{proof} 

 \begin{lemma}\thlabel{L4}
Let $\Phi$ be a gain graph. If we restrict the domain of $\mathbf{B}$ to $R(\Phi)$, then $\mathbf{B}$ is injective.
 \end{lemma}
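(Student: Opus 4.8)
The plan is to show that both the partition $\pi$ and the potential system $[\theta]_\pi$ can be read off from the subgraph $\mathbf{B}(\pi,[\theta]_\pi)$, so that $\mathbf{B}$ cannot identify two distinct elements of $R(\Phi)$. The $\Phi$-connectedness built into the definition of $R(\Phi)$ is what makes the partition recoverable, and \thref{L2} is what makes the potential system recoverable.

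First I would take $(\pi,[\theta]_\pi),(\tau,[\eta]_\tau)\in R(\Phi)$ with $B:=\mathbf{B}(\pi,[\theta]_\pi)=\mathbf{B}(\tau,[\eta]_\tau)$, and note immediately from the definition of $\mathbf{B}$ that $V(B)=\supp\pi=\supp\tau$. Next I would identify $\pi$ with $\pi(B)$. Every edge of $B$ has both endpoints in a common block $\pi_i$ (it lies in $\Phi{:}\pi$), so each component of $B$ is contained in a single block of $\pi$; hence $\pi(B)$ refines $\pi$. Conversely, $\Phi$-connectedness says that for each block $\pi_i$ the subgraph $\mathbf{B}(\pi,[\theta]_{\pi_i})$ --- which is precisely the subgraph of $B$ induced on $\pi_i$, since it is cut out by the same edge-selection rule restricted to $\pi_i$ --- is connected, so $\pi_i$ is a single component of $B$. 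Therefore $\pi(B)=\pi$, and the same reasoning gives $\pi(B)=\tau$, so $\pi=\tau$.

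It then remains to show $[\theta]_\pi=[\eta]_\pi$. By the definition of $\mathbf{B}$, every edge $e$ of $B$ with $\nu_\Gamma(e)=\{v,w\}$ has $\phi(e;v,w)=\theta(v)^{-1}\theta(w)$, so $\theta$ restricted to $V(B)$ is a potential function defining the gains of $B$; the same holds for $\eta$. By the last clause of \thref{L2}, every potential function defining the gains of $B$ lies in $[\theta]_{\pi(B)}$, and since $\pi(B)=\pi$ this gives $\eta\in[\theta]_\pi$, i.e.\ $[\eta]_\pi=[\theta]_\pi$. Combining this with $\pi=\tau$ yields $(\pi,[\theta]_\pi)=(\tau,[\eta]_\tau)$.

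The step I expect to carry the weight is the recovery of $\pi$ as $\pi(B)$: this is exactly where $\Phi$-connectedness is essential, because if some block $\pi_i$ induced a disconnected subgraph of $B$ one could split $\pi_i$ along those components and choose the potentials on the pieces independently, producing a different partition-potential pair with the same image under $\mathbf{B}$. I would also be a little careful to justify the identification of $\mathbf{B}(\pi,[\theta]_{\pi_i})$ with the subgraph of $B=\mathbf{B}(\pi,[\theta]_\pi)$ induced on $\pi_i$, but this is immediate from comparing the defining conditions.
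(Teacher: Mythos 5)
Your proof is correct and follows essentially the same route as the paper: both arguments recover the partition as $\pi(B)$ by using $\Phi$-connectedness to force each block to be exactly one component of the image subgraph, and then invoke \thref{L2} to recover the potential system. The only cosmetic difference is that the paper phrases it as showing any preimage in $R(\Phi)$ must coincide with the canonical pair $(\pi(B),[\theta]_{\pi(B)})$ from the surjectivity proof, whereas you compare two preimages directly.
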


\begin{proof}
Suppose $(\tau,[\eta]_{\tau}) \in R(\Phi)$ and $\mathbf{B}(\tau,[\eta]) = B$.  We showed in the previous proof that $B=\mathbf{B}(\pi(B),[\theta]_{\pi(B)})$.  We now prove that, if $(\tau,[\eta]_{\tau}) \neq (\pi(B),[\theta]_{\pi(B)})$, then $(\tau,[\eta]_{\tau}) \notin R(\Phi)$.

First we observe that every block of $\pi(B)$ must be contained in a block of $\tau$.  Thus, $\pi(B) \leq \tau$.  If two blocks $\pi_i, \pi_j \in \pi(B)$ are contained in the same block $\tau_k$, or if some $\pi_i \subset \tau_k$, then $\tau_k$ does not induce a connected subgraph of $B$; such a partition-potential pair cannot be in $R(\Phi)$.  Thus, the blocks of $\pi(B)$ are in distinct blocks of $\tau$ and are equal to those blocks; that is, $\tau = \pi(B)$.

Now \thref{L2} implies that $[\theta]_\tau = [\theta]_{\pi(B)}$, completing the proof.
\end{proof}

\begin{theorem}\label{isomorphism}
Let $\Phi$ be a gain graph. Then $R(\Phi)$ is isomorphic to $R^\textb(\Phi)$.
\end{theorem}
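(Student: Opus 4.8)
The plan is to show that $\mathbf{B}$, restricted to $R(\Phi)$, is an isomorphism of posets; since an order isomorphism between meet-semilattices automatically commutes with meets, this yields the semilattice isomorphism. The bijection part is already in hand: by \thref{L3.5} the image of $\mathbf{B}$ is all of $R^\textb(\Phi)$, and by \thref{L4} the restriction $\mathbf{B}|_{R(\Phi)}$ is injective. So everything reduces to proving that $\mathbf{B}$ and $\mathbf{B}^{-1}$ preserve the order, i.e.\ that $(\tau,[\eta]_{\tau}) \le (\pi,[\theta]_{\pi})$ in $R(\Phi)$ if and only if $\mathbf{B}(\tau,[\eta]_{\tau}) \subseteq \mathbf{B}(\pi,[\theta]_{\pi})$. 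At the end I will remark that the meet on $R^\textb(\Phi)$ is intersection (the intersection of two closed balanced subgraphs is again closed and balanced, by a one-line argument with balanced circles), so that the transported meet on $R(\Phi)$ is exactly the $R_n(\G)$-style meet appearing in the definition of $R(\Phi)$.

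For the forward implication, assume $\tau$ refines $\pi$ and $[\eta]_{\tau} = [\theta]_{\tau}$. Take an edge $e \in E(\mathbf{B}(\tau,[\eta]_{\tau}))$, so $\nu_{\Gamma}(e) = \{v,w\}$ lies in a single block $\tau_i$ and $\phi(e;v,w) = \eta(v)^{-1}\eta(w)$. Choose the block $\pi_j \supseteq \tau_i$. Because $[\eta]_{\tau} = [\theta]_{\tau}$ we have $\eta|_{\tau_i} \in \G\cdot\theta|_{\tau_i}$, hence $\eta(v)^{-1}\eta(w) = \theta(v)^{-1}\theta(w)$; and $\{v,w\} \subseteq \pi_j$ shows $e \in E(\Phi{:}\pi)$. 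By the definition of $\mathbf{B}$ this places $e$ in $\mathbf{B}(\pi,[\theta]_{\pi})$, so $\mathbf{B}(\tau,[\eta]_{\tau}) \subseteq \mathbf{B}(\pi,[\theta]_{\pi})$. This is a direct group computation and presents no difficulty.

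For the reverse implication I would first invoke the argument already given in the proof of \thref{L4}: since the two pairs lie in $R(\Phi)$, each of them is recovered from its image $B$ as $(\pi(B),[\theta_B]_{\pi(B)})$, where $\theta_B$ is any potential function defining the gains of $B$. Writing $B_\tau = \mathbf{B}(\tau,[\eta]_{\tau})$ and $B_\pi = \mathbf{B}(\pi,[\theta]_{\pi})$ and assuming $B_\tau \subseteq B_\pi$, I must show $\pi(B_\tau)$ refines $\pi(B_\pi)$ and $[\eta]_{\pi(B_\tau)} = [\theta]_{\pi(B_\tau)}$. The first is immediate: each component of $B_\tau$ is connected, hence contained in a single component of $B_\pi$, so each block of $\pi(B_\tau)$ sits inside a block of $\pi(B_\pi)$. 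For the second, observe that $B_\tau \subseteq B_\pi$ as gain graphs, so any potential function $\theta$ for $B_\pi$ restricts to a potential function for $B_\tau$; then the last clause of \thref{L2} gives $[\eta]_{\pi(B_\tau)} = [\theta|_{V(B_\tau)}]_{\pi(B_\tau)} = [\theta]_{\pi(B_\tau)}$. Hence $(\tau,[\eta]_{\tau}) \le (\pi,[\theta]_{\pi})$.

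The main obstacle, such as it is, lies in this reverse implication: one must promote the purely combinatorial containment $B_\tau \subseteq B_\pi$ to a statement about partitions and potentials, and this is precisely where the hypothesis that both pairs are $\Phi$-connected is used — it is what forces $\tau = \pi(B_\tau)$ and $\pi = \pi(B_\pi)$ via \thref{L4}, without which a proper refinement $\tau$ of $\pi(B_\tau)$ could map to the same subgraph and the order would not be reflected. The remaining point, that the $R_n(\G)$-style meet on $R(\Phi)$ coincides with intersection under $\mathbf{B}$, is a routine unwinding of the relation $\sim$ in that meet against the description of $\mathbf{B}$, and I would record it as a short concluding remark rather than belabor it.
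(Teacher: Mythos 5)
Your proposal is correct and follows essentially the same route as the paper: bijectivity via Lemmas \thref{L3.5} and \thref{L4}, then order preservation in both directions. The paper compresses the order-preservation step into a single ``equivalently,'' whereas you spell out both implications (the edge-by-edge computation for the forward direction and the use of $\Phi$-connectedness plus \thref{L2} for the reverse), which is a faithful and somewhat more complete rendering of the same argument.
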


\begin{proof}
 We have shown $\mathbf{B}$ is a bijection. Now we prove it preserves order. By the definition $(\tau,[\eta]_\tau)\leq(\pi,[\theta]_\pi)$ if and only if  each block $\tau_i \in \tau$ is contained in some block of $\pi$ and $\G\pdot \theta|_{\tau_i}= \G\pdot \eta|_{\tau_i}$.  Equivalently, each component of $\mathbf{B}(\tau,[\eta]_{\tau})$ is contained in a component of $\mathbf{B}(\pi,[\theta]_{\pi})$; that is,  $\mathbf{B}(\tau,[\eta]_{\tau})\subseteq \mathbf{B}(\pi,[\theta]_{\pi})$. 
\end{proof}

\begin{example}[Group expansions]
If $\Phi$ is a group expansion we can simplify the description of the graphic Rhodes semilattice. Suppose $\Gamma$ is a simple graph and $\G$ is a group.  A subgraph $\Psi \subseteq \Gamma$ is \emph{closed in $\Gamma$} if, whenever $e$ is an edge of $\Gamma$ such that $\Psi\cup \{e\}$ contains a circle that contains $e$, then $e$ is in $\Psi$.

For a subgraph $B$ of $\G\pdot\Gamma$, by $p(B)$ we mean the projection of $B$ onto the underlying graph $\Gamma$.  
If $B$ is a closed and balanced subgraph, then $p(B)$ is a closed subgraph in $\Gamma$.  Conversely, if $C$ is closed in $\Gamma$ and $[\theta]_{\pi(C)}$ is a potential system on $\pi(C)$, then $\mathbf{B}(\pi(C),[\theta]_{\pi(C)})$ is a closed and balanced subgraph of $\G\pdot\Gamma$.  

This shows that the properties of closure and balance for elements of $R^\textb(\G\pdot\Gamma)$ can be split into closure of the underlying base graph in $\Gamma$ and an arbitrary choice of potential for that base graph.  If in particular $\Gamma = K_n$, the closed subgraphs correspond bijectively to the partial partitions of the vertex set of $K_n$.
\end{example}

\section{Generalization to biased graphs}\label{bias}

A \emph{biased graph} is a graph together with a class of circles (edge sets or graphs of simple closed paths), called \emph{balanced circles}, such that no theta subgraph contains exactly two balanced circles. (A theta graph consists of three paths with the same two endpoints but otherwise disjoint from each other.) We denote the graph along with the set of balanced circles by $\Omega = (\Gamma, \mathscr{B})$. 
A subgraph $\Upsilon$ of a biased graph $\Omega$ is balanced if every circle in $\Upsilon$ is balanced and is \emph{closed and balanced} if in addition whenever there is a balanced circle $C\subseteq \Omega$ such that $C\setminus e \subseteq \Upsilon$, $C$ is in $\Upsilon$.  A gain graph $\Phi$ with underlying graph $\Gamma$ gives rise to the biased graph $\langle\Phi\rangle = (\Gamma, \mathscr{B}(\Phi))$ \cite[Section 5]{Zas1}.

The definition of the graphic Rhodes semilattice of a gain graph depends on the subgraphs and the balanced circles in the subgraph.  Since the balanced circles of a gain graph define a biased graph, we can readily generalize the definition of the graphic Rhodes semilattice to biased graphs. 

\begin{definition} \label{rhodesbg}
Let $\Omega$ be a biased graph. The \emph{Rhodes semilattice of $\Omega$}, denoted by $R^\textb(\Omega)$, is the family of closed and balanced subgraphs in $\Omega$ ordered by inclusion.
\end{definition}

We observe that the Rhodes semilattice is the disjoint union of the semilattices of balanced flats of the matroids of all induced subgraphs of $\Omega$, i.e., $\bigsqcup_{X \subseteq V(\Omega)} \Lat^\textb(\Omega{:}X)$.  The matroid can be either the frame or lift matroid, which we describe in Section \ref{lattice}. 

A difference between bias and gains is that we cannot state a partition-potential description of balanced subgraphs of a biased graph.  This is not a trivial difference, since not all biased graphs can be given gains; see \cite[Example 5.8]{Zas1}.

\section{Interlude on groupoids}\label{groupoid}

A \emph{groupoid} is a small category in which every morphism is an isomorphism, i.e., it has an inverse morphism.  We call the groupoid \emph{connected} if every pair of objects has a morphism; if it is not connected the objects fall into subsets that are connected.  Connected groupoids on the one hand, and group expansions of complete graphs on the other, are notational variants of each other, in which inverse pairs of morphisms correspond to edges of the gain graph.  Thus gain graphs can be regarded as a generalization of groupoids; in fact the definition of gain graphs by Doubilet, Rota, and Stanley \cite{DRS} is effectively a generalization of the definition of a groupoid, since it treats an edge as a pair of inversely labeled directed edges.

A \emph{trivial groupoid} is a groupoid in which there is at most one morphism from any object to any other object.  Trivial groupoids correspond to balanced gain graphs in which every component is complete; thus, the closed and balanced subgraphs in Theorem 2.4 are a notational variant of the trivial subgroupoids in \cite[Theorem 4.1]{Rhodes}, as is noted in \cite[Remark 6.1]{Rhodes}.

The principal difference between groupoids and group expansion graphs is that the latter are more general.  However, the generalization is not as great as may appear.  A group expansion $\G\cdot\Gamma$ of a connected incomplete graph $\Gamma$ extends uniquely to the group expansion $\G\cdot K_n$.  Nevertheless, gains open possibilities that will be explored elsewhere.

\section{Four lattices}\label{lattice}

In this section we have a biased graph $\Omega$.  We treat a gain graph $\Phi$ as the biased graph $\langle\Phi\rangle$.  We propose to embed the Rhodes semilattice as an order ideal of a lattice.  The simplest such lattice is the first, which is due to Rhodes and his collaborators.  The fourth is one that is useful to the complexity theory being developed by Rhodes and his collaborators.  The second and third hint at an extension of matroid theory.

\begin{definition}
The \emph{classic Rhodes lattice} of $\Omega$, denoted by $\widehat{R}(\Omega)$, is $R(\Omega)$ with an added top element $\hat{1}$.  $\widehat{R}(\G\pdot K_n)$ is the Rhodes lattice $\widehat{R}_n(\G)$ defined in \cite{Rhodes}.  This lattice was introduced by Rhodes for use in the structure theory of semigroups, where it signifies inconsistency between balanced subgraphs.
\end{definition}

With the benefit of our subgraph interpretation we propose alternative ``Rhodes lattices'' that are more substantial (that is, bigger).

The frame matroid of $\Omega$ is a matroid $\mathbf{F}(\Omega)$ on the edge set $E(\Omega)$ \cite[Section 2]{Zas2}.  
We regard each edge set $S$ as the spanning subgraph $(V(\Omega),S)$. The rank function of $\mathbf{F}(\Omega)$ is $r_\mathbf{F}(S) = n-b(S)$, where $b(S)$ denotes the number of components of $(V(\Omega),S)$ that are balanced.  
We define the \emph{frame flats} of $\Omega$ as the subgraphs $(V(\Omega),S)$ such that $r_\mathbf{F}(S)<r_\mathbf{F}(S \cup e)$ for every $e \in E(\Omega) \setminus S$. (We refer to \cite[Section 2]{Zas2}) for a precise description of frame flats.)

\begin{definition}\label{frameR}
The \emph{frame Rhodes lattice} or \emph{frame subgraph lattice} of $\Omega$, denoted by $R^\mathbf{F}(\Omega)$, is the family of all \emph{frame-closed subgraphs} of $\Omega$, by which we mean the frame flats of the induced subgraphs of $\Omega$.
\end{definition}

The lift matroid of $\Omega$ is another matroid $\mathbf{L}(\Omega)$ on the set $E(\Omega)$ \cite[Section 3]{Zas2}.  
Again we regard each edge set $S$ as a spanning subgraph of $\Omega$, $(V(\Omega),S)$.  The rank function is $r_\mathbf{L}(S) = n-c(S)+\delta(S)$, where $c(S)$ denotes the total number of components of $(V(\Omega),S)$ and $\delta(S) = 0$ if $S$ is balanced, $1$ if $S$ is unbalanced.
We define the \emph{lift flats} of $\Omega$ to be the spanning subgraphs of $\Omega$ whose edge sets are closed in $\mathbf{L}(\Omega)$, which means that $r_\mathbf{L}(S \cup e) > r_\mathbf{L}(S)$ if $e \notin S$.  

\begin{definition}\label{liftR}
The \emph{lift Rhodes lattice} or \emph{lift subgraph lattice}, denoted by $R^{\mathbf{L}}(\Omega)$, is the family of all \emph{lift-closed subgraphs} of $\Omega$, which means all lift flats of all induced subgraphs of $\Omega$.
\end{definition}

Definition \ref{liftR} extends to the natural one-point extension of the lift matroid, the \emph{complete} (or \emph{extended}) \emph{lift matroid}, whose balanced flats are the same as those of $\mathbf{L}(\Omega)$.  We omit the formal definition; see \cite[Section 4]{Zas2}.

Probably the most interesting suggestion for a substantial ``Rhodes lattice'' is the lattice of semiclosed subgraphs.  A subgraph $\Upsilon$ is called \emph{semiclosed}\footnote{Previously called ``balance-closed'' in \cite{Zas1, Zas2}.} if, whenever there is a balanced circle $C\subseteq \Omega$ such that $C \setminus e \subseteq \Upsilon $, then $C \subseteq \Upsilon$.  This seems especially relevant to the Rhodes theory because, if $\G\pdot K_n$ is viewed as a groupoid, then its subgroupoids, which are relevant to Rhodes' complexity theory, are the semiclosed subgraphs.

\begin{definition}\label{semiclosed}
The \emph{semiclosed Rhodes lattice} or \emph{semiclosed subgraph lattice}, denoted by $R^{\mathbf{S}}(\Omega)$, is the family of semiclosed subgraphs of all induced subgraphs of $\Omega$, ordered by inclusion.
\end{definition}

The graphic Rhodes semilattice is the order ideal of balanced subgraphs in each of these proposed Rhodes lattices.

\begin{example}[The different lattices]
The four proposed lattices are all different (unless $\Omega$ is balanced and in a few other cases).  To see this we give a small gain-graphic example: $\Phi = \bbZ_6\pdot K_4$.  The balanced elements are the subgraphs belonging to $R_4(\bbZ_6)$; they are the subgraphs of $K_4$ with any balanced gains.  The only other element of the classic Rhodes lattice $\widehat R(\Phi) = \widehat R_4(\bbZ_6)$ is $\hat1$, which we may regard as $\Phi$ itself since $\Phi$ is unbalanced.  Thus, $\Phi$ is an element of all four lattices.  

In the frame Rhodes lattice $R^\mathbf{F}(\Phi)$ the other lattice elements are the following:
\begin{gather*}
\Phi{:}X_2,\ \Phi{:}X_2 \cup Y_1,\ \Phi{:}X_2 \cup Y_2,\ \Phi{:}X_2 \cup e_{Y_2}, \\
\Phi{:}X_3,\ \Phi{:}X_3 \cup Y_1,
\end{gather*}
where $X_i$ denotes a set of $i$ vertices, $Y_j$ denotes a disjoint set of $j$ vertices (where $i+j\leq4$), and $e_{Y_2}$ denotes any single edge in $\Phi{:}Y_2$.  Each $\Phi{:}X_3$ is the $\bbZ_6$-expansion of $K_3$ with vertex set $X_3$.

In the lift Rhodes lattice $R^\mathbf{L}(\Phi)$ the other lattice elements are the following:
\begin{gather*}
\Phi{:}X_2,\ \Phi{:}X_2 \cup Y_1,\ \Phi{:}X_2 \cup Y_2,\ \Phi{:}X_2 \cup \Phi{:}Y_2, \\
\Phi{:}X_3,\ \Phi{:}X_3 \cup Y_1.
\end{gather*}

The elements of the semiclosed Rhodes lattice $R^\mathbf{S}(\Phi)$ (cf.\ \cite{Gottstein}) are the semiclosed subgraphs of $\Phi$, which are all subgraphs of the forms
\begin{equation*}\begin{gathered}
X_i \text{ for } i=0,1,2,3,4,\\
A{:}X_2,\ A{:}X_2 \cup Y_1,\ A{:}X_2 \cup Y_2,\ A{:}X_2 \cup B{:}Y_2,\\
\fH\pdot K_4{:}X_3,\ \fH\pdot K_4{:}X_3 \cup Y_1,\ \fH\pdot K_4,
\end{gathered}\end{equation*}
and all subgraphs obtained from the last three by switching, where $A{:}X_2$ means a subgraph of $\Phi{:}X_2$ with any nonempty edge set and similarly for $B{:}Y_2$, and $\fH$ is any subgroup of $\bbZ_6$.  (Note that on $K_3$ and $K_4$ we get any group expansion by a subgroup of $\bbZ_6$, not merely the trivial expansion and the $\bbZ_6$-expansion.)  
Among these subgraphs are all those in the other three Rhodes lattice candidates as well as a great many more.  The Rhodes semilattice of $\Phi$ consists of the semiclosed subgraphs that use only the trivial group and have no multiple edges.
\end{example}

\begin{example}[The different lattices on an incomplete graph]
To show what happens with an incomplete group expansion we present a similar example based on the graph $C_4$, the circle of length 4; this is $\Psi = \bbZ_6\pdot C_4$.  Since $C_4$ is a proper subgraph of $K_4$ we get some of the same elements of each Rhodes lattice and some new ones.  

The balanced elements of each Rhodes lattice are the proper subgraphs of $C_4$ with any gain on each edge as well as $C_4$ itself with any balanced gains.  The unbalanced elements are $\Psi$ itself and the following subgraphs:

For $R^\mathbf{F}(\Psi)$:
\begin{gather*}
\Psi{:}X_2,\ \Psi{:}X_2 \cup Y_1,\ \Psi{:}X_2 \cup Y_2,\ \Psi{:}X_2 \cup e_{Y_2}, \\
\Psi{:}X_3=\bbZ_6\pdot P_2,\ \bbZ_6\pdot P_2 \cup Y_1,
\end{gather*}
where now $X_2$ is the vertex set of an edge of $C_4$ and $Y_2$ is the vertex set of the opposite edge, while $P_2$ denotes a path of length 2 in $C_4$ and $Y_1$ is the set of the one vertex not in the $P_2$.

For $R^\mathbf{L}(\Psi)$:  
\begin{gather*}
\Psi{:}X_2,\ \Psi{:}X_2 \cup Y_1,\ \Psi{:}X_2 \cup Y_2,\ \Psi{:}X_2 \cup e_{Y_2}, \\
\Psi{:}X_3=\bbZ_6\pdot P_2,\ \bbZ_6\pdot P_2 \cup Y_1, \Psi{:}X_2 \cup \Psi{:}Y_2, 
\end{gather*}

For $R^\mathbf{S}(\Psi)$:  
\begin{gather*}
A{:}X_2,\ A{:}X_2 \cup Y_1,\ A{:}X_2 \cup Y_2,\ A{:}X_2 \cup B{:}Y_2,\ A{:}X_2 \cup B{:}X'_2, \\
\text{and all switchings of } \fH\pdot C_4
\end{gather*}
where $X_2'$ is the vertex set of an edge adjacent to the edge of $X_2$.  Note that the subgraphs are balanced if $|A|=1$ or $|A|=|B|=1$ as appropriate.
\end{example}

None of the four proposed Rhodes lattices is a geometric lattice (with few, uninteresting exceptions).  
However, the frame and lift Rhodes lattices are combinations of geometric lattices, since the subgraphs with a fixed vertex set do constitute a geometric lattice (cf.\ the remark after Definition \ref{rhodesbg}).  The way this combination works is a topic of current research in order to understand the abstract structure of the original Rhodes semilattice and its generalizations.  

The semiclosed subgraph lattice is essentially the same as the lattice of all subgroupoids of a connected groupoid.  It has been used in the Rhodes theory of semigroup complexity.  The structure of this lattice is currently under investigation from the gain-graph viewpoint.

\section{Acknowledgement}

We are eternally grateful to Stuart Margolis for his generous assistance and advice in understanding the context and literature around the Rhodes semilattice.


\begin{thebibliography}{99}

\bibitem{AHNR}  Beverly Austin, Karsten Henckell, Chrystopher Nehaniv, and John Rhodes, ``Subsemigroups and complexity via the presentation lemma,'' J. Pure Appl. Algebra, 101(3) (1995) 245--289.

\bibitem{DRS} P.\ Doubilet, G.-C.\ Rota, and R.\ Stanley, ``On the foundations of combinatorial theory (VI):  The idea of generating function,'' in \emph{Proceedings of the Sixth Berkeley Symposium on Mathematical Statistics and Probability} (Berkeley, Calif., 1970/71), Vol.\ II: \emph{Probability Theory}, pp.\ 267--318.  
University of California Press, Berkeley, Calif., 1972.

\bibitem{Gottstein} Michael J.\ Gottstein, ``The latttice of semiclosed sets in a gain graph, with groupoids'', in preparation.

\bibitem{Rhodes} Stuart Margolis, John Rhodes, and Pedro V. Silva, ``On the Dowling and Rhodes lattices and wreath products," J. Algebra, 578 (2021), 55–91.

\bibitem{R} John Rhodes, ``Kernel systems---a global study of homomorphisms on finite semigroups,'' J. Algebra, 49(1) (1977) 1--45.

\bibitem{Zas1} Thomas Zaslavsky, ``Biased graphs. I. Bias, balance and gains," J. Combin. Theory Ser. B, 47 (1989), 32–52.

\bibitem{Zas2} Thomas Zaslavsky, ``Biased graphs II. The three matroids", J. Combin. Theory Ser. B, 51 (1991) 46–72.

\end{thebibliography}
\end{document}